\crefname{equation}{}{}
\crefname{figure}{{\sc Figure}}{{\sc Figure}}
\crefname{subsection}{Subsection}{Subsections}
\newcommand{\nc}{\newcommand}
\newtheorem{theorem}{Theorem}[section]
\newtheorem{proposition}[theorem]{Proposition}
\newtheorem{lemma}[theorem]{Lemma}
\newtheorem{corollary}[theorem]{Corollary}
\newtheorem*{claim*}{Claim}
\theoremstyle{definition}
\newtheorem{remark}[theorem]{Remark}
\newcommand{\C}{{\mathbb C}}
\newcommand{\F}{{\mathbb F}}
\newcommand{\Z}{{\mathbb Z}}
\DeclarePairedDelimiter\floor{\lfloor}{\rfloor}
\DeclareMathOperator{\ord}{ord}
\numberwithin{equation}{section} 
\numberwithin{figure}{section}
\numberwithin{table}{section}
\nc{\Beaver}[1]{\todo[size=\tiny,color=cyan!10]{#1 \\ \hfill --- Beaver}}
\nc{\BEAVER}[1]{\todo[size=\tiny,inline,color=cyan!10]{#1
		\\ \hfill --- Beaver}}
\nc{\Semin}[1]{\todo[size=\tiny,color=magenta!10]{#1 \\ \hfill --- Semin}}
\nc{\SEMIN}[1]{\todo[size=\tiny,inline,color=magenta!10]{#1
		\\ \hfill --- Semin}}
\nc{\nt}[1]{\todo[size=\tiny,color=exgreen!10]{#1 \\ \hfill --- Note}}
\nc{\NT}[1]{\todo[size=\tiny,inline,color=exgreen!10]{#1
		\\ \hfill --- Note}}
\title{Explicit constructions of Diophantine tuples \\ over finite fields}
\author{Seoyoung Kim}
\address{Mathematisches Institut, Georg-August Universit\"at G\"ottingen, Bunsenstrasse 3-5, 37073 G\"ottingen, Germany}
\email{seoyoung.kim@mathematik.uni-goettingen.de}
\author{Chi Hoi Yip}
\address{Department of Mathematics \\ University of British Columbia \\ Vancouver  V6T 1Z2 \\ Canada}
\email{kyleyip@math.ubc.ca}
\author{Semin Yoo}
\address{Discrete Mathematics Group \\ Institute for Basic Science \\ 55 Expo-ro Yuseong-gu, Daejeon 34126 \\ South Korea}
\email{syoo19@ibs.re.kr}
\subjclass[2020]{11D72, 11D45, 11T24, 11B83}
\keywords{Diophantine tuples, character sum, cyclotomic polynomial \\ \indent Corresponding author: Semin Yoo (syoo19@kias.re.kr)}
\begin{document}

\maketitle

\begin{abstract}
A Diophantine $m$-tuple over a finite field $\F_q$ is a set $\{a_1,\ldots, a_m\}$ of $m$ distinct elements in $\mathbb{F}_{q}^{*}$ such that $a_{i}a_{j}+1$ is a square in $\F_q$ whenever $i\neq j$. In this paper, we study $M(q)$, the maximum size of a Diophantine tuple over $\F_q$, assuming the characteristic of $\F_q$ is fixed and $q \to \infty$. By explicit constructions, we improve the lower bound on $M(q)$. In particular, this improves a recent result of Dujella and Kazalicki by a multiplicative factor.
\end{abstract}

\section{Introduction}

A set of $m$ positive integers is a \textit{Diophantine $m$-tuple} if the product of any two distinct elements in the set is one less than a square.  The first known example of Diophantine $4$-tuples is $\{1,3,8,120\}$, due to Fermat. It had been conjectured that there is no Diophantine $5$-tuple and the conjecture was recently proved to be true in the sequel of striking works by Dujella \cite{duj2004diophantine}, and He, Togb\'e, and Ziegler \cite{HTZ19}. Diophantine $m$-tuples have been also generalized and studied in various algebraic domains, such as rational numbers, finite fields, and polynomial rings; we refer to Dujella's webpage \cite{Dujella-page} for a comprehensive list of references.

In this paper, we focus on Diophantine tuples over finite fields. Throughout the paper, let $p$ be an odd prime, $q$ a power of $p$, $\F_q$ the finite field with $q$ elements, and $\F_q^*=\F_q \setminus \{0\}$. A \emph{Diophantine $m$-tuple over $\F_q$} is a set $\{a_1,\ldots, a_m\}$ of $m$ distinct elements in $\mathbb{F}_{q}^{*}$ such that $a_{i}a_{j}+1$ is a square in $\F_q$ (including $0$) whenever $i\neq j$. For a prime $p$, Diophantine tuples over $\mathbb{F}_{p}$ were first formally introduced by Dujella and Kazalicki in \cite{21DK} and were studied by using their connections to elliptic curves and their modularity. However, they have been implicitly studied at least two decades ago, see for example \cite{D02, D04, G01}. In the present paper, we focus on the case of a finite extension of $\mathbb{F}_p$. Recently, a few papers \cite{21DK, k23, KYY, MRS21, Shparlinski23} are devoted to Diophantine tuples and their generalizations over finite fields. 

For each odd prime power $q$, let $M(q)$ be the maximum size of a Diophantine tuple in $\F_q$. Estimating $M(q)$ is the exact analog of estimating the maximum size of a Diophantine tuple over integers, and thus it is of special interest. When $q$ is a square, we have $M(q) \geq \sqrt{q}-1$ since $\F_{\sqrt{q}}^*$ is a Diophantine tuple in $\F_q$, and it is shown in \cite[Proposition 1.3]{KYY} that $M(q)\leq \sqrt{q}+\frac{5}{2}$ for all odd $q$. Thus, the value of $M(q)$ is essentially determined when $q$ is a square. On the other hand, it is not even clear what should be the right magnitude of $M(q)$ when $q$ is a non-square. 

Dujella and Kazalicki in \cite[Theorem 17]{21DK} showed that $M(p) \geq (1-o(1))\log_4 p$ as $p \to \infty$. In fact, the identical proof extends to all finite fields with odd characteristic. This lower bound follows from a standard application of Weil's bound. It is not hard to show a slightly stronger statement, namely $(1-o(1))\log_4 q$ is a lower bound for any \emph{maximal Diophantine tuple} $A$ over $\F_q$ (maximal in the sense that $A$ cannot be extended to be a Diophantine tuple with a larger size); see \cref{prop:maximal}. Thus, it may not be optimal to use this uniform lower bound for the size of maximal Diophantine tuples to deduce a lower bound on $M(q)$, and $(1-o(1))\log_4 q$ can be regarded as \emph{the trivial lower bound} on $M(q)$. \textcolor{black}{However, improving this trivial lower bound is not easy. We conjecture that $M(q)\geq (1-o(1))\log_2 q$ by assuming that the squares behave like a random set.}

We focus on explicit constructions of large Diophantine tuples and in particular give an improved lower bound on $M(q)$ for an infinite family of $q$. When $p$ is fixed, and $q=p^n \to \infty$, this improves the trivial lower bound $\log_4 q$ by a constant factor. 

\begin{theorem}\label{main}
If $q>7$ is a power of an odd prime $p$ and $q \equiv 1,5,7 \pmod 8$, then
\[M(q) \geq \left \lfloor Q \right \rfloor, \quad \text{where } Q=\frac{p}{p-1}\bigg(\frac{\frac{1}{2}\log q -2\log\log q}{\log 2} +1\bigg).
\]
In particular, if $p$ is fixed and $q \to \infty$, then
$$M(q) \geq \bigg(\frac{p}{p-1}-o(1)\bigg)\log_4 q.$$
\end{theorem}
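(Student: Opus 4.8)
The plan is to produce an \emph{explicit} set $A=\{g^{e}:e\in E\}\subseteq \F_q^*$, where $g$ is a primitive root of $\F_q$ and $E$ is a carefully chosen set of exponents with $|E|=\lfloor Q\rfloor$, and then to verify it is a Diophantine tuple by a character-sum count. Writing $\chi$ for the quadratic character of $\F_q$ (with $\chi(0)=0$), the defining condition that $a_ia_j+1$ be a square becomes
\[
\chi\!\left(g^{m}+1\right)\in\{0,1\}\qquad\text{for every } m\in E\mathbin{\hat+}E:=\{e+e':e,e'\in E,\ e\neq e'\},
\]
so the whole problem is to choose $E$ and $g$ so that every pairwise-sum exponent avoids the non-squares. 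First I would fix the shape of $E$ (an interval, or a dilate thereof) so that $E\mathbin{\hat+}E$ is an interval of length $\approx 2|E|$, keeping all exponents $O(|E|)$; the remaining freedom for the count is the choice of $g$.

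The engine for the improvement over the trivial $\log_4 q$ is the characteristic-$p$ identity $(x+1)^{p}=x^{p}+1$, which gives $g^{m}+1=\bigl(g^{m/p}+1\bigr)^{p}$ whenever $p\mid m$, and hence
\[
\chi\!\left(g^{m}+1\right)=\chi\!\left(g^{m/p}+1\right).
\]
Thus the condition at an exponent $m$ is identical to the condition at its $p$-free part $m_{0}=m/p^{v_p(m)}$, and the genuinely distinct conditions are indexed only by the $p$-free parts occurring in $E\mathbin{\hat+}E$. Since a proportion $1/p$ of integers are divisible by $p$, this collapses the number of effective conditions by a factor $\tfrac{p-1}{p}$, which is exactly the reciprocal of the gain $\tfrac{p}{p-1}$ in the statement. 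This also explains why the improvement should degrade as $p\to\infty$.

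To realize this I would bound
\[
N:=\#\Bigl\{g:\ \chi\!\left(g^{m_0}+1\right)\in\{0,1\}\ \text{for all effective }m_0\Bigr\}
\]
from below by expanding $\prod_{m_0}\tfrac{1+\chi(g^{m_0}+1)}{2}$ and applying Weil's bound to each incomplete character sum $\sum_{g}\chi\!\bigl(\prod_{m_0\in S}(g^{m_0}+1)\bigr)$. Because the exponents $m_0$ are $p$-free and distinct, the polynomials $\prod_{m_0\in S}(x^{m_0}+1)$ are separable and not perfect squares, so Weil applies and each sum is $O\bigl((\sum_{m_0}m_0)\sqrt q\bigr)$; with $|E|=O(\log q)$ the total error stays $\widetilde{O}(\sqrt q)$. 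The main term $q/2^{L}$, where $L$ is the number of effective conditions, then dominates provided $L\le \log_4 q-O(\log\log q)$, and converting back through $L\approx\tfrac{p-1}{p}\,|E\mathbin{\hat+}E|$ yields a lower bound for $M(q)$ of the shape $\tfrac{p}{p-1}\cdot c\,\log_4 q$. The hypothesis $q\equiv 1,5,7\pmod 8$ enters exactly to control the boundary exponents: it guarantees that $2$ or $-1$ is a square, which is what one needs to handle the sum-exponents $m$ with $g^{m}=\pm1$ (for instance a product equal to $1$ forces the condition $\chi(2)=1$, and $g^{m}=-1$ must be read as the allowed value $a_ia_j+1=0$), and to keep the relevant polynomials squarefree.

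The main obstacle is the simultaneous optimization of three competing demands: (i) maximizing the $p$-collapse so that a full $\tfrac1p$-fraction of the pairwise conditions is absorbed, which pushes toward exponent sets closed under multiplication by $p$, that is, toward wraparound/subgroup structure; (ii) keeping the exponents, hence the polynomial degrees, as small as $O(|E|)$ so that Weil's bound beats the main term, which pushes toward short intervals and \emph{against} wraparound; and (iii) retaining enough freedom in $g$ to run the count at all. Reconciling (i) and (ii)—choosing $E$ so that the pairwise sums collapse by the full factor $\tfrac{p-1}{p}$ while the $p$-free parts stay $O(|E|)$—is the delicate point, and is presumably where the optimal constant $c=1$ (rather than the $c=\tfrac12$ that a plain interval gives, since there $|E\mathbin{\hat+}E|\approx 2|E|$) is won; checking the non-perfect-square hypothesis for Weil's bound in all the collapsed cases, under the residue condition, is the accompanying technical step.
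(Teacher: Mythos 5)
Your overall strategy---an explicit geometric-progression construction $\{g^e : e \in E\}$, verified by a Weil-bound count over the base, with the factor $\tfrac{p}{p-1}$ extracted from the characteristic-$p$ collapse---is the right one, and your identity $\chi(g^{m}+1)=\chi(g^{m/p}+1)$ via $(x+1)^p=x^p+1$ is essentially the paper's Lemma on cyclotomic polynomials ($\Phi_n$ is a perfect square over $\F_q$ when $p\mid n$) in another guise. But there is a genuine gap, and it is exactly at the point you flag as ``presumably where the optimal constant $c=1$ is won.'' With $E$ an interval (or dilate), $|E\mathbin{\hat+}E|\approx 2|E|$ by elementary additive combinatorics, so after the $p$-collapse you are left with $L\approx \tfrac{p-1}{p}\cdot 2|E|$ independent conditions; the constraint $2^L\lesssim \sqrt q$ then yields only $|E|\lesssim \tfrac{p}{2(p-1)}\log_4 q$. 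Since $\tfrac{p}{2(p-1)}\le \tfrac34<1$ for every odd $p$, this is \emph{weaker than the trivial bound} $\log_4 q$, so the argument as outlined does not prove the theorem, and no choice of $E$ within your framework can fix it: conditions indexed by distinct elements of $E\mathbin{\hat+}E$ and collapsed only along $p$-power fibers cannot beat the doubling barrier $|E\mathbin{\hat+}E|\ge 2|E|-1$ while keeping exponents small enough for Weil.

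The missing idea is a second, two-to-one collapse coming from symmetry rather than from Frobenius. The paper takes the exponent set centered at $0$, namely $A=\{cy^{-m/2},\ldots,cy^{-1},c,cy,\ldots,cy^{m/2}\}$ with $y$ a \emph{square}, so the pairwise conditions are indexed by $i\in[-(m-1),m-1]$ but the condition at $-j$ is identical to the condition at $+j$: for $q\equiv 1\pmod 4$ one takes $c=r$ with $r^2=-1$, so $c^2y^{i}+1=r^2(y^{i}-1)$ and $y^{-j}-1=(1-y^j)/y^j$ is a square iff $y^j-1$ is (using that $y$ and $-1$ are squares); for $q\equiv 7\pmod 8$ one takes $c=1$ and uses $y^{-j}+1=(y^j+1)/y^j$ together with $\chi(2)=1$ for the exponent $0$. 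This is the precise role of the hypothesis $q\equiv 1,5,7\pmod 8$, and it halves the number of independent conditions to $\approx \tfrac{p-1}{p}\,m$ for a tuple of size $m+1$, giving the full constant $\tfrac{p}{p-1}$. (The paper then indexes the surviving conditions by the squareness of the individual cyclotomic values $\Phi_{2i}(y)$, $p\nmid i$, rather than of $y^{i}+1$ directly, which keeps the polynomials in the Weil bound visibly non-square and makes the count of conditions exactly $T_p(m)=m-\lfloor (m-1)/p\rfloor$; the existence of a suitable $y$ then follows by adapting Cohen's argument.) Without this symmetrization step your proposal does not reach the stated bound.
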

We refer to \cref{3mod8} for a discussion on the case $q \equiv 3 \pmod{8}$.

There is a straightforward way to construct a Diophantine tuple over $\F_p$ explicitly using the least quadratic non-residue modulo $p$. Indeed, if we use $n(p)$ to denote the least quadratic non-residue modulo $p$, then $M(p) \geq \sqrt{n(p)}+1$ since $A=\{0,1,\ldots, \lfloor\sqrt{n(p)} \rfloor\}$ is a Diophantine tuple over $\F_p$. However, this construction does not lead to an improved lower bound on $M(p)$. In fact, under the generalized Riemann hypothesis (GRH), Lamzouri, Li, and Soundararajan \cite{LLS15} showed that $n(p)\leq (\log p)^2$. On the other hand, the best known lower bound is  $n_p=\Omega(\log p \log \log p)$ under GRH, due to Montgomery \cite{M71}. Thus, this construction only yields that $M(p)=\Omega(\sqrt{\log p \log \log p})$, which is worse than the trivial lower bound. Our construction is of a similar flavor, and we use a simple trick to convert products into sums to use the space of intervals more effectively so that the trivial lower bound on $M(q)$ can be improved. The essential idea of our proof is related to the properties of cyclotomic polynomials, which partially explains the factor $\frac{p}{p-1}$.

\section{Preliminaries}
In this section, we introduce two main tools of our construction: character sums and cyclotomic polynomials.

\subsection{Weil's bound and its consequences}

We first recall Weil's bound for complete character sums; see for example \cite[Theorem 5.41]{LN97}.

\begin{lemma}(Weil's bound)\label{Weil}
Let $\chi$ be a multiplicative character of $\F_q$ of order $d>1$, and let $g \in \F_q[x]$ be a monic polynomial of positive degree that is not an $d$-th power of a polynomial. 
Let $n$ be the number of distinct roots of $g$ in its
splitting field over $\F_q$. Then for any $a \in \F_q$,
$$\bigg |\sum_{x\in\mathbb{F}_q}\chi\big(ag(x)\big) \bigg|\le(n-1)\sqrt q \,.$$
\end{lemma}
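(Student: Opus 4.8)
The plan is to interpret the character sum as a count of points on an auxiliary curve and then to invoke the Riemann Hypothesis for curves over finite fields (Weil's theorem), which is the source of the square-root cancellation. First I would dispose of the constant $a$. If $a=0$ the sum equals $0$ because $\chi(0)=0$, so the bound is trivial. If $a\neq 0$, then $\chi(ag(x))=\chi(a)\chi(g(x))$ for every $x$ (both sides vanish when $g(x)=0$), and since $|\chi(a)|=1$ we get $\big|\sum_{x}\chi(ag(x))\big|=\big|\sum_{x}\chi(g(x))\big|$. Hence it suffices to bound $S:=\sum_{x\in\F_q}\chi(g(x))$, and the hypothesis that $g$ is not a $d$-th power is unaffected by this reduction.

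Next I would linearize the problem via point counting. Because $\chi$ has order $d$, we have $d\mid q-1$, and for $u\in\F_q^{*}$ the number of $y\in\F_q$ with $y^{d}=u$ is $\sum_{j=0}^{d-1}\chi^{j}(u)$. Applying this on the superelliptic curve $C:y^{d}=g(x)$ and treating separately the $x$ with $g(x)=0$ (each giving the single solution $y=0$) yields
\[
\#\{(x,y)\in\F_q^{2}:y^{d}=g(x)\}\;=\;q\;+\;\sum_{j=1}^{d-1}S_{j},\qquad S_{j}:=\sum_{x\in\F_q}\chi^{j}(g(x)),
\]
where we used $\chi^{j}(0)=0$ for $1\le j\le d-1$. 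Thus $S=S_{1}$ appears as a single summand of the deviation of $\#C(\F_q)$ from $q$.

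The crux is to extract the \emph{individual} sum $S_{1}$ and to prove $|S_{1}|\le(n-1)\sqrt{q}$. The Riemann Hypothesis for the smooth projective model of $C$ controls only the \emph{total} deviation $\sum_{j}S_{j}$, so to separate the pieces I would use the automorphism $y\mapsto\zeta_{d}y$ of $C$; it decomposes the relevant first cohomology (equivalently, the Jacobian up to isogeny) into $\chi^{j}$-isotypic parts, and the part attached to $\chi$ produces an $L$-polynomial $L(\chi,T)=\prod_{i}(1-\omega_{i}T)$ with $S_{1}=-\sum_{i}\omega_{i}$. Two inputs then close the argument: (i) a Riemann--Hurwitz computation for the cover $C\to\mathbb{P}^{1}$, whose branch locus lies among the $n$ distinct roots of $g$ and the point at infinity, bounds $\deg L(\chi,T)$ by $n-1$, the hypothesis that $g$ is not a $d$-th power being exactly the non-degeneracy condition that forces genuine cancellation (were $g=h^{d}$, then $\chi(g(x))=1$ off the zeros of $g$ and the sum would be of size about $q$); and (ii) Weil's theorem gives $|\omega_{i}|=\sqrt{q}$ for each $i$. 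Together these yield $|S_{1}|\le(n-1)\sqrt{q}$.

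The genuinely hard step is input (ii): the Riemann Hypothesis for curves is Weil's deep theorem, and it is precisely what produces the square-root saving; the only route that avoids the algebraic geometry is Stepanov's elementary polynomial method, which reproves the same estimate by building auxiliary functions that vanish to high order along $C(\F_q)$. The more technical bookkeeping obstacle is the degree bound $\deg L(\chi,T)\le n-1$, for which one must track the local ramification of the superelliptic cover. Since this is a classical result, in the paper we simply cite it as \cite[Theorem 5.41]{LN97}.
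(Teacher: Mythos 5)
The paper does not actually prove this lemma: it is quoted as a classical result with a pointer to \cite[Theorem 5.41]{LN97}, and your proposal ultimately defers to the same citation, so the two treatments agree. Your supplementary sketch (disposing of $a$, counting points on $y^{d}=g(x)$ via $\sum_{j=0}^{d-1}\chi^{j}$, isolating $S_{1}$ through the $\chi$-isotypic part of the $L$-polynomial, bounding its degree by $n-1$ via Riemann--Hurwitz, and invoking the Riemann Hypothesis for curves) is a correct outline of the standard proof, and the genuinely delicate bookkeeping --- the singularities of the affine model when $g$ has repeated roots, the points at infinity, and the fact that only the roots $c_i$ whose multiplicities are not divisible by $d$ contribute to the conductor --- is appropriately flagged and left to the cited reference.
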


The following lemma is a classical application of Weil's bound for complete character sums; see for example \cite[Exercise 5.66]{LN97}. 

\begin{lemma}\label{lem:countingsolns}
Let $d\geq 2$ be a positive integer. Let $q\equiv 1\pmod{d}$ be a prime power. Let $\chi$ be a multiplicative character of $\F_q$ with order $d$. Let $a_1,a_2,\ldots, a_k$ be $k$ distinct elements of $\F_q$, and let $\epsilon_1,\ldots, \epsilon_k \in \C$ be $d$-th roots of unity. Let $N$ denote the number of $x \in \F_q$ such that $\chi(x+a_i)=\epsilon_i$ for $1 \leq i \leq k$. Then 
$$
\bigg|N-\frac{q}{d^k}\bigg|\leq \bigg(k-1-\frac{k}{d}+\frac{1}{d^k}\bigg)\sqrt{q}+\frac{k}{d}.
$$
\end{lemma}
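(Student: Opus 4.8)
The plan is to detect each condition $\chi(x+a_i)=\epsilon_i$ by the standard orthogonality relation for $d$-th roots of unity, and then expand the resulting product into a sum of complete character sums, each controlled by \cref{Weil}. For $y\in\F_q^*$ and a $d$-th root of unity $\epsilon$, the quantity $\frac{1}{d}\sum_{j=0}^{d-1}\epsilon^{-j}\chi^{j}(y)$ equals $1$ if $\chi(y)=\epsilon$ and $0$ otherwise. Since $\chi(0)=0$, the $k$ distinct points $x=-a_i$ never contribute to $N$, so I may restrict to those $x$ with $x+a_i\neq 0$ for all $i$, where the indicator is valid. Expanding the product of indicators gives
\[
N=\frac{1}{d^{k}}\sum_{\mathbf{j}\in\{0,\dots,d-1\}^{k}}\Big(\prod_{i=1}^{k}\epsilon_i^{-j_i}\Big)\,S(\mathbf{j}),
\]
where $g_{\mathbf{j}}(x)=\prod_{i=1}^{k}(x+a_i)^{j_i}$ and $S(\mathbf{j}):=\sum_{x\neq -a_l\,\forall l}\chi\big(g_{\mathbf{j}}(x)\big)$ is the restricted character sum.

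The term $\mathbf{j}=\mathbf{0}$ is the main term: $g_{\mathbf{0}}\equiv 1$, so $S(\mathbf{0})=q-k$, contributing $\frac{q-k}{d^{k}}$. Next I isolate the error, writing $N-\frac{q}{d^{k}}=-\frac{k}{d^{k}}+\frac{1}{d^{k}}\sum_{\mathbf{j}\neq\mathbf{0}}\big(\prod_i\epsilon_i^{-j_i}\big)S(\mathbf{j})$, and bounding each nonzero term. For $\mathbf{j}\neq\mathbf{0}$, since $0\le j_i\le d-1$ and the $a_i$ are distinct, $g_{\mathbf{j}}$ is monic of positive degree and is not a $d$-th power of a polynomial (a $d$-th power would force $d\mid j_i$, hence $j_i=0$, for all $i$), so \cref{Weil} applies. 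Its number of distinct roots equals the support size $m=\#\{i:j_i>0\}$, giving $\big|\sum_{x\in\F_q}\chi(g_{\mathbf{j}}(x))\big|\le(m-1)\sqrt{q}$. The restricted sum $S(\mathbf{j})$ differs from this complete sum only at the points $x=-a_l$ with $j_l=0$ (at the points with $j_l>0$ the polynomial vanishes), of which there are $k-m$, each contributing a value of modulus $1$; hence $|S(\mathbf{j})|\le(m-1)\sqrt{q}+(k-m)$.

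Finally I sum over all nonzero $\mathbf{j}$, grouping by support size $m$: there are $\binom{k}{m}(d-1)^{m}$ tuples of support size $m$. Using the elementary identities $\sum_{m=0}^{k}\binom{k}{m}(d-1)^{m}=d^{k}$ and $\sum_{m=0}^{k}\binom{k}{m}(d-1)^{m}m=k(d-1)d^{k-1}$, the coefficient of $\sqrt{q}$ becomes $\frac{1}{d^{k}}\sum_{m=1}^{k}\binom{k}{m}(d-1)^{m}(m-1)=k-1-\frac{k}{d}+\frac{1}{d^{k}}$, while the remaining constant contribution $\frac{k}{d^{k}}+\frac{1}{d^{k}}\sum_{m=1}^{k}\binom{k}{m}(d-1)^{m}(k-m)$ collapses to $\frac{k}{d}$, yielding the claimed estimate. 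I expect the main subtlety to lie in the bookkeeping of the boundary points $x=-a_l$: one must correctly separate the genuine Weil contribution, which depends only on the support size, from the $k-m$ correction terms, and then verify that both binomial summations land exactly on the stated constants rather than on a cruder bound.
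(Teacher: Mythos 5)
Your proof is correct and is precisely the standard argument behind the paper's citation (the paper does not prove this lemma itself but refers to \cite[Exercise 5.66]{LN97}): detect each condition with the orthogonality relation for $d$-th roots of unity, expand, apply Weil's bound to each non-constant $g_{\mathbf{j}}$, and carry out the binomial bookkeeping. All the delicate points are handled correctly --- the exclusion of the $k$ points $x=-a_l$, the fact that $g_{\mathbf{j}}$ is not a $d$-th power because $0\le j_i\le d-1$ and the $a_i$ are distinct, the $(k-m)$ boundary corrections, and the two binomial identities, which do evaluate exactly to the stated coefficients $k-1-\tfrac{k}{d}+\tfrac{1}{d^k}$ and $\tfrac{k}{d}$.
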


In the following proposition, we give a lower bound on the size of maximal Diophantine tuples over $\F_q$.  

\begin{proposition}\label{prop:maximal}
If $A$ is a maximal Diophantine $m$-tuple over $\F_q$, then  $q<2^{2m-2}m^{2}$. In particular, $M(q) \geq (1-o(1))\log_4 q$.     
\end{proposition}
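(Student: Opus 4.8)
The plan is to count, via a quadratic character sum, the elements that could be adjoined to $A$, and then to use maximality to force this count to be small. Let $\chi$ be the quadratic character of $\F_q$ (with $\chi(0)=0$), write $A=\{a_1,\dots,a_m\}$, and let $G$ be the set of $b\in\F_q$ for which $a_ib+1$ is a square (including $0$) for every $i$. Since each factor $1+\chi(a_ib+1)$ equals $0$, $1$, or $2$, and is positive exactly when $a_ib+1$ is a square, I would study
\[
S:=\sum_{b\in\F_q}\ \prod_{i=1}^{m}\bigl(1+\chi(a_ib+1)\bigr),
\]
which is supported on $G$ and satisfies $0\le S\le 2^{m}|G|$.

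First I would bound $|G|$ using maximality. If $b\in G$ with $b\neq 0$ and $b\notin A$, then $A\cup\{b\}$ is a Diophantine $(m+1)$-tuple, contradicting maximality; hence $G\subseteq A\cup\{0\}$ and $|G|\le m+1$, giving the upper bound $S\le 2^{m}(m+1)$. Next I would produce a matching lower bound for $S$ by expanding the product over subsets $T\subseteq\{1,\dots,m\}$,
\[
S=\sum_{T}\ \sum_{b\in\F_q}\chi\Bigl(\prod_{i\in T}(a_ib+1)\Bigr).
\]
The term $T=\emptyset$ contributes $q$. For $T\neq\emptyset$, the polynomial $\prod_{i\in T}(a_ix+1)$ is, up to a nonzero scalar, monic of degree $|T|$ with $|T|$ distinct roots $-1/a_i$, hence squarefree and not a perfect square; \cref{Weil} then bounds the corresponding inner sum by $(|T|-1)\sqrt q$. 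Summing over $T$ and using $\sum_{\emptyset\neq T}(|T|-1)=m2^{m-1}-2^{m}+1$ yields
\[
S\ \ge\ q-\bigl(m2^{m-1}-2^{m}+1\bigr)\sqrt q.
\]

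Combining the two estimates gives $q-(m2^{m-1}-2^{m}+1)\sqrt q\le 2^{m}(m+1)$, a quadratic inequality in $\sqrt q$. Solving it and checking that the relevant root lies below $m2^{m-1}$ forces $\sqrt q<m2^{m-1}$, that is, $q<2^{2m-2}m^{2}$. For the final claim I would apply this bound to a maximal tuple of size $m\le M(q)$; taking logarithms gives $m>\log_4 q+1-\log_2 m$, and a short case split (according to whether $m\ge\log_4 q$), together with $\log_2\log_4 q=o(\log q)$, upgrades this to $M(q)\ge(1-o(1))\log_4 q$.

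The main obstacle lies less in any single estimate than in the bookkeeping: one must verify that the extension polynomial is genuinely not a perfect square so that \cref{Weil} applies with the sharp count $n=|T|$, evaluate the combinatorial sum $\sum_{\emptyset\neq T}(|T|-1)$ exactly, and confirm that the resulting quadratic inequality's root lands below $m2^{m-1}$ to give the clean constant $2^{2m-2}m^{2}$. The passage to the asymptotic lower bound is also slightly delicate, since the inequality $m>\log_4 q+1-\log_2 m$ is self-referential in $m$; this is precisely what the case split resolves.
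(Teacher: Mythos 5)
Your proposal is correct and follows essentially the same route as the paper: both use maximality to confine the set of admissible extenders to $A\cup\{0\}$ and both detect extenders via the quadratic-character count $\sum_b\prod_i(1+\chi(a_ib+1))$ controlled by Weil's bound, the only difference being that you expand the product by hand where the paper invokes \cref{lem:countingsolns} (itself proved by exactly this expansion). The one point to watch is that your final verification reduces to $2(m+1)<m(2^m-1)$, which holds only for $m\ge 3$ (at $m=2$ it is an equality, rescued because $q$ is odd, and $m=1$ needs separate handling), but the paper's own proof has the same small-$m$ edge cases.
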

\begin{proof}
Let $A=\{a_1, a_2, \ldots, a_m\}$. Since $A$ is maximal, it cannot be further extended. This means that 
$$
\emptyset=\{x \in \F_q^* \setminus A: a_ix+1 \text{ is a square}\} \supset \{x \in \F_q^* \setminus A: \chi(x+a_i^{-1})=\chi(a_i)\}:=B,
$$
where $\chi$ is the quadratic character of $\F_q$. If $q \geq 2^{2m-2}m^{2}$,  then \cref{lem:countingsolns} implies that
$$
|B| \geq \frac{q}{2^m}-\Big(\frac{m-2}{2}+\frac{1}{2^m}\Big)\sqrt{q}-\frac{3m}{2}>\frac{q}{2^m}-\frac{m}{2}\sqrt{q}\geq 0,
$$
a contradiction. Therefore, we must have $q<2^{2m-2}m^2$. It follows that $m \geq (1-o(1))\log_4 q$.
\end{proof}

\subsection{Cyclotomic polynomials}
Let $n$ be a positive integer. Let $\Phi_n(x)$ be the $n$-th {\em cyclotomic polynomial}, that is, the
monic polynomial whose roots are the $n$-th primitive roots of unity (in the complex number). We refer to the basic properties of cyclotomic polynomials in \cite{S22}. It is known that $\Phi_n(x) \in \Z[x]$ and the degree of $\Phi_n(x)$ is given by the Euler's totient function $\phi(n)$. Another important identity is 
$$
x^n-1=\prod_{d \mid n} \Phi_d(x).
$$

For our purpose, in the following discussions, we always view $\Phi_n(x)$ as a polynomial over $\F_q$. The following lemma is crucial in our proof; we provide a short proof for the sake of completeness.

\begin{lemma}\label{lem: Phi}
Let $n,n'$ be positive integers. 
\begin{enumerate}
    \item[\textup{(1)}] If $p \mid n$, then $\Phi_n(x)$ is a square of a polynomial. 
    \item[\textup{(2)}] If $p \nmid n$, then $\Phi_n(x)$ is square-free.
    \item[\textup{(3)}] If  $n \neq n'$ such that $p \nmid n$ and $p \nmid n'$, then $\Phi_n(x)$ and $\Phi_{n'}(x)$ are coprime. 
\end{enumerate}
\end{lemma}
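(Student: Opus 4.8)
The plan is to treat the two square-free/coprimality claims (2) and (3) together via the separability of $x^N-1$, and to handle the square claim (1) separately through the Frobenius endomorphism.

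For (2) and (3), the starting observation is that whenever $p \nmid N$, the polynomial $x^N-1 \in \F_q[x]$ is square-free. Indeed, its derivative is $Nx^{N-1}$, and since $N \neq 0$ in $\F_q$ the only possible common root of $x^N-1$ and $Nx^{N-1}$ would be $0$, which is not a root of $x^N-1$; hence $\gcd(x^N-1, Nx^{N-1})=1$ and $x^N-1$ has no repeated factors. I would then combine this with the identity $x^N-1=\prod_{d\mid N}\Phi_d(x)$, which holds in $\F_q[x]$ because it holds in $\Z[x]$ and the coefficients reduce mod $p$. Taking $N=n$ (allowed since $p\nmid n$) shows that $\Phi_n(x)$, being a factor of a square-free polynomial, is itself square-free, giving (2). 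For (3), I would set $N=\operatorname{lcm}(n,n')$, which is again coprime to $p$; since $n$ and $n'$ are distinct divisors of $N$, the product $\Phi_n(x)\Phi_{n'}(x)$ divides $x^N-1$, so it is square-free, which forces $\gcd(\Phi_n,\Phi_{n'})=1$.

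For (1), write $n=p^a m$ with $a\geq 1$ and $p\nmid m$. The key input is the classical recursion for cyclotomic polynomials, which gives $\Phi_{p^a m}(x)=\Phi_m(x^{p^a})/\Phi_m(x^{p^{a-1}})$ for $p\nmid m$. Over $\F_p$ the Frobenius identity $\Phi_m(x^{p^k})=\Phi_m(x)^{p^k}$ holds because $\Phi_m$ has coefficients in the prime field, so reducing mod $p$ yields
\[
\Phi_n(x)=\Phi_m(x)^{p^a-p^{a-1}}=\Phi_m(x)^{p^{a-1}(p-1)}.
\]
Since $p$ is odd, $p-1$ is even, so the exponent $p^{a-1}(p-1)$ is even and $\Phi_n(x)=\big(\Phi_m(x)^{p^{a-1}(p-1)/2}\big)^2$ is a perfect square, proving (1).

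The steps are individually short, so the main thing to get right is the transfer of identities from characteristic $0$ to characteristic $p$: one must check that the factorization $x^N-1=\prod_{d\mid N}\Phi_d(x)$ and the recursion for $\Phi_{p^a m}$ are genuine identities in $\F_q[x]$ (they are, as reductions of integer identities), and that the Frobenius collapse $\Phi_m(x^{p^k})=\Phi_m(x)^{p^k}$ is applied only to polynomials with coefficients in $\F_p$. I also want to be careful that the hypothesis that $p$ is odd is exactly what is needed in (1) to guarantee the exponent is even; this is the one place where the oddness of $p$ genuinely enters.
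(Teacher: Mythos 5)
Your proposal is correct and follows essentially the same route as the paper: part (1) via the recursion $\Phi_{p^am}(x)=\Phi_m(x^{p^a})/\Phi_m(x^{p^{a-1}})$ collapsing under Frobenius to $\Phi_m(x)^{p^{a-1}(p-1)}$, and parts (2) and (3) via the square-freeness of $x^N-1$ for $p\nmid N$ (the paper uses $N=nn'$ where you use $\operatorname{lcm}(n,n')$, an immaterial difference). Your explicit remark that the oddness of $p$ is what makes the exponent even is a small point the paper leaves implicit.
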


\begin{proof}
\begin{enumerate}
    \item If $n=p^m r$ with $p \nmid r$ and $m \geq 1$, then we know $\Phi _{n}(x)=\Phi _{pr}(x^{p^{m-1}})$ and $\Phi_{pr}(x)=\Phi _{r}(x^{p})/\Phi_{r}(x)=(\Phi_{r}(x))^{p-1}$ from basic properties of cyclotomic polynomials \cite[Section 1]{S22}. It follows that $$\Phi _{n}(x)=\Phi _{pr}(x^{p^{m-1}})=(\Phi_{pr}(x))^{p^{m-1}}=(\Phi_{r}(x))^{(p-1)p^{m-1}}.$$ Thus  $\Phi_n(x)$ is a square of a polynomial.
    \item If $p \nmid n$, then $x^n-1$ is square-free since it does not have double root over $\overline{\F_q}$ by checking the derivative of $x^n-1$. Since $\Phi_n(x) \mid (x^n-1)$, it follows that $\Phi_n(x)$ is square-free.
    \item Consider $x^{nn'}-1$. We have $\Phi_n(x)\Phi_{n'}(x) \mid (x^{nn'}-1)$ by the definition of cyclotomic polynomials. Since $x^{nn'}-1$ is square-free, it follows that $\Phi_n(x)$ and $\Phi_{n'}(x)$ are coprime.
\end{enumerate}    
\end{proof}

\section{Explicit constructions of Diophantine tuples over finite fields}

In this section, we construct Diophantine $m$-tuples over $\mathbb{F}_{q}$ with $m \ge (1-o(1))\frac{p}{p-1}\log_{4}q$, where $q=p^{n}$. 
When $q=p$ is a prime, a lower bound of the largest size of Diophantine tuples in $\mathbb{F}_p$ is almost the same as $\log_{4}p$.
When $q=p^{n}$, this improves the lower bound $\log_4 q$ by a multiplicative constant.
We prove this in two cases depending on the condition of $q$.

\subsection{Case I: $q \equiv 1 \pmod{4}$}

Note that the condition $q \equiv 1 \pmod{4}$ implies that there is $r \in \mathbb{F}_{q}$ such that $r^{2}=-1$.

\begin{proposition}\label{prop: lower bound1}
Let $m$ be a positive even integer. 
Assume that $y \in \mathbb{F}_{q}^*$ is a square of order $\ge m$ such that $y^{i}-1$ a square for each $1 \le i \le m$. Let $r \in \F_q$ such that $r^{2}=-1$.
Then the set $$A=\{ry^{-m/2},ry^{-m/2+1},\ldots,ry^{-1},r,ry,ry^2,\ldots , ry^{m/2}\}$$ is a Diophantine $(m+1)$-tuple over $\mathbb{F}_{q}$. In particular, $M(q) \ge m+1$.
\end{proposition}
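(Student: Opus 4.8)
The plan is to verify directly the three conditions in the definition of a Diophantine tuple for the set $A$: that all $m+1$ listed elements lie in $\F_q^*$, that they are pairwise distinct, and that $ab+1$ is a square for every pair of distinct $a,b\in A$. Membership in $\F_q^*$ is immediate, since $r^2=-1\neq 0$ gives $r\neq 0$ and $y\in\F_q^*$, so each $ry^k\neq 0$. For distinctness, I would index the elements as $ry^k$ with $k$ ranging over the $m+1$ consecutive integers $-m/2,\ldots,m/2$; then $ry^s=ry^t$ forces $y^{s-t}=1$, and whenever $s\neq t$ we have $1\le |s-t|\le m$, which is ruled out by the order hypothesis on $y$. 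Hence $A$ has exactly $m+1$ distinct elements.

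The heart of the argument is a product-to-sum computation. For $a=ry^s$ and $b=ry^t$ with $s\neq t$, I compute
\[
ab+1=r^2y^{s+t}+1=1-y^{s+t},
\]
using $r^2=-1$. Setting $N=s+t$, the claim reduces to showing that $1-y^{N}$ is a square, and I would split into three cases by the sign of $N$. If $N=0$ (which happens exactly for the symmetric pairs $s=-t$), then $1-y^0=0$ is a square. If $N>0$, I rewrite $1-y^N=r^2(y^N-1)$, a product of the square $r^2$ with the square $y^N-1$ (a square by hypothesis), hence a square. If $N<0$, I clear denominators as $1-y^N=(y^{|N|}-1)/y^{|N|}$, where the numerator is a square by hypothesis and the denominator is a square because $y$ is itself a square; a square divided by a nonzero square is again a square.

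The step demanding the most care is the bookkeeping on the exponent $N=s+t$. Since $s,t$ are distinct elements of $\{-m/2,\ldots,m/2\}$, one checks $|N|\le m-1$, so $|N|$ always lies in the range $1\le |N|\le m$ on which the hypothesis ``$y^i-1$ is a square'' is available; this is the point I expect to be the main, if mild, obstacle. It is also worth flagging where each hypothesis enters: the assumption $q\equiv 1\pmod 4$, i.e.\ that $-1=r^2$ is a square, is exactly what makes the case $N>0$ work, while the assumption that $y$ (and hence every power of $y$) is a square is used precisely in the case $N<0$ to handle the denominator. Assembling the three cases shows that $ab+1$ is a square for every pair, so $A$ is a Diophantine $(m+1)$-tuple and the bound $M(q)\ge m+1$ follows immediately.
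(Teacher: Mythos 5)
Your proof is correct and follows essentially the same route as the paper's: the key identity $ab+1=r^2y^{s+t}+1=r^2(y^{s+t}-1)$ together with the case split on the sign of $s+t$ is exactly the paper's argument, and your extra bookkeeping (nonzero-ness, distinctness, the bound $|s+t|\le m-1$) only makes explicit what the paper leaves implicit. One small caveat on the distinctness step: the hypothesis $\ord(y)\ge m$ rules out $y^{s-t}=1$ only for $1\le|s-t|\le m-1$, not for the single extremal pair $s=m/2$, $t=-m/2$ where $|s-t|=m$ (which could coincide if $\ord(y)=m$ exactly) --- but this is a boundary imprecision inherent in the proposition as stated rather than a defect of your argument, since the paper's own proof does not address distinctness at all.
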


\begin{proof}
It suffices to show that $r^{2}y^{i}+1=r^{2}(y^{i}-1)$ is a square for each $|i| \le m-1$. 
This follows from the assumption immediately when $i$ is nonnegative.
When $i=-j$ is negative, we have $y^{-j}-1=(1-y^{j})/y^{j}$, which is a square since $y$ and $-1$ are squares.
This completes the proof.
\end{proof}

\begin{corollary}
If $q \equiv 1 \pmod{4}$, then
\[M(q) \geq \left \lfloor Q \right \rfloor, \quad \text{where } Q=\frac{p}{p-1}\bigg(\frac{\frac{1}{2}\log q -2\log\log q}{\log 2} +1\bigg).
\]
\end{corollary}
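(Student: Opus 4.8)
The plan is to deduce the corollary from \cref{prop: lower bound1} by producing, for a suitable \emph{even} integer $m$, an element $y \in \F_q^*$ that is a square of order at least $m$ with $y^i-1$ a square for every $1 \le i \le m$; this immediately yields $M(q) \ge m+1$, and the whole game is to take $m$ as large as possible. Since $q \equiv 1 \pmod 4$, the required $r$ with $r^2=-1$ exists, so the hypotheses of \cref{prop: lower bound1} are available once such a $y$ is found.

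First I would replace the conditions ``$y^i-1$ is a square'' by conditions on cyclotomic factors. Writing $y^i-1 = \prod_{d \mid i}\Phi_d(y)$ and invoking \cref{lem: Phi}(1), which makes $\Phi_d(y)$ an automatic square whenever $p \mid d$, it suffices to force $\chi(\Phi_d(y))=1$ for every $d$ in $D := \{d : 1 \le d \le m,\ p \nmid d\}$, together with $\chi(y)=1$, where $\chi$ is the quadratic character. I would then check that these constraints already encode every hypothesis: each one forces $\Phi_d(y)\neq 0$, so $y$ is not a primitive $d$-th root of unity for any $d \in D$; since $\ord(y)$ is coprime to $p$, this excludes every order $\le m$ and gives $\ord(y) > m$, whence also $y^i-1\neq 0$ for $i \le m$, while $\chi(y)=1$ makes $y$ a square. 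This reduction is the conceptual heart and is exactly what produces the factor $\frac{p}{p-1}$: the number of constraints is $k = 1+|D| = 1 + m - \lfloor m/p \rfloor \approx \frac{p-1}{p}\,m$, rather than $\approx m$, precisely because the factors with $p \mid d$ come for free.

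Next I would count $N$, the number of $y \in \F_q$ meeting all $k$ constraints, by the standard device of writing the indicator of each constraint as $\tfrac12\bigl(1+\chi(f(y))\bigr)$ and expanding the product over $f \in \{x\}\cup\{\Phi_d : d \in D\}$. The diagonal term contributes $q/2^k$. For each nonempty subset $S$, \cref{lem: Phi}(2) and (3) guarantee that $\prod_{f\in S} f$ is square-free (the $\Phi_d$ with $p\nmid d$ are square-free and pairwise coprime, and each is coprime to $x$), hence not a square of a polynomial, so \cref{Weil} bounds the associated character sum by $(n_S-1)\sqrt q$ with $n_S$ the number of distinct roots. Summing over all subsets and accounting for the $y$ at which some $f$ vanishes, I expect an estimate of the shape $N \ge \frac{q}{2^k} - \frac{R}{2}\sqrt q - \frac{R}{2}$, where $R = 1 + \sum_{d\in D}\phi(d) = O(m^2)$.

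Finally I would optimize over $m$. The condition $N>0$ reduces, after bounding $R = O(m^2)$ and using $\lfloor m/p\rfloor \ge m/p-1$, to an inequality roughly of the form $\tfrac12\log q > 2\log m + (m-\lfloor m/p\rfloor)\log 2 + O(1)$; since the admissible $m$ has size $\Theta(\log q)$, one has $\log m = (1+o(1))\log\log q$, and solving for $m$ gives the threshold $m \approx \frac{p}{p-1}\cdot\frac{\frac12\log q - 2\log\log q}{\log 2}$. Choosing $m$ to be the largest even integer below this threshold and noting $\frac{p}{p-1} < 2$, which absorbs both the additive $\frac{p}{p-1}$ hidden in the $+1$ term of $Q$ and the parity loss, should yield $m+1 \ge \lfloor Q\rfloor$, whence $M(q) \ge \lfloor Q\rfloor$. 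The main obstacle I anticipate is exactly this final calibration: tracking the constants in the Weil estimate and the vanishing-locus correction tightly enough to recover the precise numerator $\tfrac12\log q - 2\log\log q$ and the additive $+1$, rather than a weaker constant, and handling the parity of $m$ without losing a unit. The cyclotomic and character-sum inputs are clean; the delicacy lies entirely in this optimization.
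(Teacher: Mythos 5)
Your overall strategy is sound, but it is a genuinely different route from the paper's proof of this corollary, which is essentially a two-line citation: Cohen's theorem \cite[Theorem 1]{88Cohen} already produces a square $y$ of order at least $\lfloor Q\rfloor$ with $y^i-1$ a square for all $i$ up to that order, and the corollary then follows from \cref{prop: lower bound1} after adjusting the parity of $m$ by at most one (the loss being absorbed by the $+1$ in the tuple size $m+1$). What you propose is, in effect, to re-derive Cohen's theorem from scratch; and your derivation --- reducing ``$y^i-1$ is a square for all $i\le m$'' to the conditions $\chi(\Phi_d(y))=1$ for $p\nmid d$ via \cref{lem: Phi}, noting that the factors with $p\mid d$ are free (which is where $\tfrac{p}{p-1}$ comes from), expanding indicators into character sums, and applying \cref{Weil} to the square-free products of distinct cyclotomic polynomials --- is exactly the machinery the paper deploys in the case $q\equiv 7\pmod 8$ (\cref{complete-condition} and the theorem proving \cref{cohen-bound}), and is essentially Cohen's own argument. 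Your observation that $\chi(\Phi_d(y))=1$ for all $d\le m$ with $p\nmid d$ automatically forces $\ord(y)>m$ (since $\ord(y)\mid q-1$ is coprime to $p$) is correct and is a nice way to fold the order condition into the same constraint system.

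The one substantive caveat is that you explicitly defer the final calibration --- extracting the precise threshold $\lfloor Q\rfloor$ with numerator $\tfrac12\log q-2\log\log q$ and the additive $+1$, rather than merely the asymptotic $(\tfrac{p}{p-1}-o(1))\log_4 q$. That step is exactly the content of Cohen's theorem and of the computation following \cref{Estimation-S_p(t)} in the paper (where one needs $\sqrt q>2^{T-1}\tfrac{p}{p-1}T^2$ with $T=m-\lfloor (m-1)/p\rfloor$, and the bound $\log T<\log\log q-\log(2\log 2)$ is what recovers the $-2\log\log q$). It is fillable, but note that your intermediate bound uses $2\log m$ where the tight argument needs $2\log T$ with $T\approx\tfrac{p-1}{p}m$; since $m$ itself can slightly exceed $\log q$, working with $\log m$ rather than $\log T$ risks losing the exact constant. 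As written, then, your proposal proves the qualitative statement but leaves the exact value of $\lfloor Q\rfloor$ unverified; citing \cite[Theorem 1]{88Cohen}, as the paper does, sidesteps this entirely.
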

\begin{proof}
Cohen \cite[Theorem 1]{88Cohen} showed that there is $y \in \F_q^*$ satisfying the assumption of \cref{prop: lower bound1} with order at least $m=\lfloor Q\rfloor$ (see the first paragraph of \cite[Section 2]{88Cohen}), except that $m$ is possibly odd, in which case we replace $m$ by $m-1$. Thus, \cref{prop: lower bound1} implies that $M(q)\geq (m-1)+1=m=\left \lfloor Q \right \rfloor$.   
\end{proof}

\subsection{Case II: $q \equiv 7 \pmod{8}$} Since $q \equiv 7 \pmod 8$, it follows that $-1$ is not a square and the construction in \cref{prop: lower bound1} fails to work. Nevertheless, we take advantage of the fact that $2$ is a square. To get the desired lower bound on $M(q)$, we first establish the following new observation and then modify Cohen's proof. 

\begin{proposition}
\label{complete-condition}
Let $m$ be a positive even integer. 
Assume that $y \in \mathbb{F}_{q}^{*}$ is a square of order $\ge m$ such that $\Phi_{i}(y)$ is a square for each $i=2,4,\ldots,2m-2$ with $p \nmid i$.
Then the set 
$$A=\{ y^{-m/2},y^{-m/2+1},\ldots,y^{-1},1,y,\ldots,y^{m/2}\}$$
is a Diophantine $(m+1)$-tuple. In particular, $M(q) \ge m+1$.
\end{proposition}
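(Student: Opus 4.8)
The plan is to reduce the Diophantine condition on $A$ to a family of one-variable square conditions, and then resolve each of them using the cyclotomic factorization of $y^s+1$ together with \cref{lem: Phi}. First I would record what needs checking: two distinct elements of $A$ have the form $y^k,y^l$ with $k\neq l$ and $-m/2\le k,l\le m/2$, so their product plus one equals $y^{k+l}+1$, where the exponent $s=k+l$ runs through integers with $|s|\le m-1$. Hence it suffices to prove that $y^s+1$ is a square in $\F_q$ for every integer $s$ with $|s|\le m-1$.

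Next I would dispose of the easy exponents. The value $s=0$ gives $y^0+1=2$, which is a square exactly because $q\equiv 7\pmod 8$; this is the only place the congruence class is used. For a negative exponent $s=-j$, I would write $y^{-j}+1=y^{-j}(y^j+1)$, and since $y$ is a square so is $y^{-j}$, whence $y^{-j}+1$ is a square if and only if $y^j+1$ is. These reductions leave only the range $1\le s\le m-1$.

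The key step handles $1\le s\le m-1$. Here $y^s\neq 1$ since $\mathrm{ord}(y)\ge m>s$, so dividing $x^{2s}-1=\prod_{d\mid 2s}\Phi_d(x)$ by $x^s-1=\prod_{d\mid s}\Phi_d(x)$ and evaluating at $y$ gives the identity over $\F_q$
\[
y^s+1=\frac{y^{2s}-1}{y^s-1}=\prod_{\substack{d\mid 2s\\ d\nmid s}}\Phi_d(y).
\]
Every index $d$ occurring here satisfies $d\le 2s\le 2m-2$, and it must be even: if $d$ were odd and $d\mid 2s$ then $d\mid s$, contrary to $d\nmid s$. Thus each such $d$ lies in $\{2,4,\ldots,2m-2\}$. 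I would then show each factor $\Phi_d(y)$ is a square: when $p\nmid d$ this is exactly the hypothesis, and when $p\mid d$ then \cref{lem: Phi}(1) expresses $\Phi_d(x)$ as the square of a polynomial $h(x)\in\F_q[x]$, so $\Phi_d(y)=h(y)^2$ is a square. A product of squares (a zero factor being admissible, as $0$ counts as a square) is a square, so $y^s+1$ is a square, completing the square conditions.

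Finally I would secure distinctness, which upgrades ``set of $m+1$ powers'' to ``$(m+1)$-tuple.'' The $m+1$ exponents $-m/2,\ldots,m/2$ are consecutive and span a range of $m$, so the corresponding powers are distinct once $\mathrm{ord}(y)\ge m+1$. Here I would note that $\mathrm{ord}(y)=m$ is impossible: since $m$ is even, it would force $y^{m/2}$ to have order $2$, i.e.\ $y^{m/2}=-1$, which contradicts that $y^{m/2}$ is a square while $-1$ is not (as $q\equiv 3\pmod 4$). Hence $\mathrm{ord}(y)\ge m+1$, $|A|=m+1$, and $M(q)\ge m+1$. I expect the main obstacle to be the cyclotomic factorization step, specifically the observation that every index $d$ in the product for $y^s+1$ is even and at most $2m-2$, so that precisely the hypothesized values $\Phi_2(y),\ldots,\Phi_{2m-2}(y)$ — supplemented by the automatically-square factors from \cref{lem: Phi}(1) when $p\mid d$ — govern the whole argument; the $s=0$ case and the order/parity argument for distinctness are the secondary points requiring care.
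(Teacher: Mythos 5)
Your proof is correct and follows essentially the same route as the paper's: reduce to showing $y^{s}+1$ is a square for $|s|\le m-1$, handle $s=0$ via $2$ being a square (from $q\equiv 7\pmod 8$), negative $s$ via $y$ being a square, and positive $s$ via the factorization $y^{s}+1=\prod_{d\mid 2s,\ d\nmid s}\Phi_{d}(y)$ combined with the hypothesis and \cref{lem: Phi}(1). Your extra verification that the $m+1$ powers are distinct — ruling out $\ord(y)=m$ because $y^{m/2}$ is a square while $-1$ is not when $q\equiv 3\pmod 4$ — is a detail the paper leaves implicit, and a welcome addition.
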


\begin{proof}
It suffices to show that $y^{i}+1$ is a square for each $|i| \le m-1$. This holds when $i=0$ since $2$ is a square given that $q \equiv 7 \pmod 8$.
This also holds when $i$ is positive since 
\[y^{i}+1=\frac{y^{2i}-1}{y^{i}-1}=\prod_{\substack{d\nmid i,\\d|2i}}\Phi_{d}(y) , \quad \textrm{where $1\leq i \leq m-1$},\]
which is again a square by assumption. It is worthwhile to mention here that if $p \mid d$, then $\Phi_d(y)$ is automatically a square by \cref{lem: Phi}.
If $i$ is negative, we can write $i=-j$ and we have $y^{-j}+1=(y^{j}+1)/y^{j}$ where $1\leq j \leq m-1$, which is a square since 
$y$ is a square. 
Hence $A$ is a Diophantine $(m+1)$-tuple over $\F_{q}$.
\end{proof}

In order to get a lower bound of $M(q)$ with respect to $q$, we need to count the number of $y$ in the assumption of \cref{complete-condition}. 
The following observation helps to describe the counting problem using the cyclotomic polynomials $\Phi_{i}(y)$. 

\begin{lemma}
\label{Estimation-S_p(t)}
Let 
\[  S_{p}(m)=\sum_{\substack{i=1\\ p\nmid i }}^{m-1} \varphi(2i) \quad \text{and} \quad \text{$T_{p}(m)=m- \left \lfloor \frac{m-1}{p} \right \rfloor$}.\]
Then, for $m\geq 2$, we have
\begin{equation*}
S_{p}(m)<\frac{p}{p-1}T_{p}^{2}(m).   
\end{equation*}
\end{lemma}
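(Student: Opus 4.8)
The plan is to bound the totient summand by an elementary linear quantity and then reduce the assertion to a transparent polynomial inequality. The crucial first observation is that
\[
\varphi(2i) \le i \qquad \text{for every } i \ge 1.
\]
Indeed, any integer in $\{1,2,\ldots,2i\}$ that is coprime to $2i$ must be odd, and there are exactly $i$ odd integers in this range; hence $\varphi(2i)$, being the number of such integers, is at most $i$. This single inequality already absorbs all the arithmetic of the problem, and the remaining slack is comfortable enough that no finer estimate on $\varphi$ is needed.

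Granting this, summing over the admissible indices gives
\[
S_p(m) \;\le\; \sum_{\substack{i=1\\ p\nmid i}}^{m-1} i \;=:\; R_p(m),
\]
so it suffices to prove the purely elementary inequality $R_p(m) < \frac{p}{p-1}T_p(m)^2$. To evaluate $R_p(m)$ I would write $m-1 = pL + s$ with $L = \lfloor (m-1)/p \rfloor$ and $0 \le s \le p-1$, so that $T_p(m) = (p-1)L + (s+1)$ and
\[
R_p(m) = \frac{(m-1)m}{2} - p\cdot\frac{L(L+1)}{2},
\]
the second term accounting for the multiples of $p$ removed from the full sum $\sum_{i=1}^{m-1} i$. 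Both sides of the target inequality are then polynomials in $L$ and $s$ with $p$ as a parameter; after substituting and clearing the denominator $p-1$, the difference $\frac{2p}{p-1}T_p(m)^2 - 2R_p(m)$ collects into terms that are each nonnegative for $p \ge 3$, $L \ge 0$, $0 \le s \le p-1$, together with a strictly positive contribution, which yields the strict inequality for all $m \ge 2$.

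The argument is robust because the bound $\varphi(2i) \le i$ loses essentially a factor of two: asymptotically $R_p(m) \sim \frac{p-1}{2p}\,m^2$ while $\frac{p}{p-1}T_p(m)^2 \sim \frac{p-1}{p}\,m^2$, so there is ample room and no delicate estimation is required. The only mildly technical point—and the step I would treat most carefully—is the bookkeeping around the floor function: the substitution $m-1 = pL+s$ must be tracked consistently so that the resulting polynomial inequality is manifestly valid term by term, including the boundary residues $s=0$ and $s=p-1$ and the base case $m=2$. None of this is deep, but it is where an off-by-one or sign slip would most easily enter.
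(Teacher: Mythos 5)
Your proposal is correct, and it takes a genuinely different route from the paper. The paper's proof is a two-step reduction: it splits the sum according to the parity of $i$ to get $S_p(m)\le 2s_p(m)$ where $s_p(m)=\sum_{i<m,\,p\nmid i}\varphi(i)$ (using $\varphi(2i)\le 2\varphi(i)$), and then invokes Cohen's Lemma~2 as a black box for the inequality $2s_p(m)\le \frac{p}{p-1}T_p^2(m)$. You instead use the cruder but cleaner bound $\varphi(2i)\le i$ (correctly justified: every residue coprime to $2i$ is odd) and then verify the resulting polynomial inequality $R_p(m)<\frac{p}{p-1}T_p(m)^2$ by hand via the substitution $m-1=pL+s$. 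I checked the algebra you left implicit: with $T_p(m)=(p-1)L+s+1$ and $R_p(m)=\frac{(m-1)m}{2}-p\frac{L(L+1)}{2}$, one finds
\[
\frac{p}{p-1}T_p(m)^2-R_p(m)=\frac{p(p-1)L^2}{2}+pLs+2pL+\frac{p(s+1)^2}{p-1}-\frac{s(s+1)}{2},
\]
and since $\frac{p(s+1)^2}{p-1}\ge (s+1)^2>\frac{s(s+1)}{2}$, every term is nonnegative with a strictly positive remainder, so the strict inequality holds for all $L\ge 0$ and $0\le s\le p-1$ (in particular for $m=2$). What your approach buys is self-containment: the paper's argument outsources the essential inequality to an external reference, whereas yours is fully elementary and verifiable on the page; the cost is that your totient bound is lossier, but as you correctly observe there is roughly a factor of two of slack, so nothing is lost for the purposes of this lemma. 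The one thing you should do before calling the proof complete is actually write out the displayed computation above rather than asserting that the terms ``collect into nonnegative pieces''---that is the only place an error could hide, and it is where the work actually lives.
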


\begin{proof}
Let 
$s_{p}(m)=\sum_{i=1, p\nmid i }^{m-1} \varphi(i)$.
The result follows from the basic properties of Euler's totient function $\varphi$-function. We have
\begin{align*}
S_{p}(m)=\sum_{\substack{1 \leq i \leq m-1\\ i \text{ even, } p\nmid i }} \varphi(2i)+\sum_{\substack{1 \leq i \leq m-1\\ i \text{ odd, } p\nmid i }} \varphi(2i)
 \le 2s_{p}(m)
 \le \frac{p}{p-1}T_{p}^{2}(m),
\end{align*}
where the last inequality is from \cite[Lemma 2]{88Cohen}.
\end{proof}
Hence, \cref{Estimation-S_p(t)} gives an upper bound of $S_{p}(m)$ using $T_{p}(m)$.
For our purpose, we define a set $I(m)=\{1 \leq i \leq  m-1: p \nmid i\} \cup \{0\}$.
Then we have that 
$T_p(m)=|I(m)|$.

Now, we are ready to prove our main theorem.

\begin{theorem}
Let $q=p^n>7$. Then 
\begin{equation}
\label{cohen-bound}
M(q) \geq \lfloor Q \rfloor, \quad \text{where } Q=\frac{p}{p-1}\bigg(\frac{\frac{1}{2}\log q -2\log\log q}{\log 2} +2\bigg).
\end{equation}
\end{theorem}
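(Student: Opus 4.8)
The plan is to deduce the bound from \cref{complete-condition}: it suffices to exhibit, for $m$ as large as the statement allows, an element $y \in \F_q^*$ that is a square, has order at least $m$, and satisfies $\chi(\Phi_{2j}(y)) = 1$ for every $j \in \{1,\dots,m-1\}$ with $p \nmid j$, where $\chi$ is the quadratic character; then $M(q) \ge m+1$. I would produce such a $y$ by a counting argument in the spirit of \cref{lem:countingsolns}, but with the cyclotomic polynomials $\Phi_{2j}$ in place of the linear factors there.

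Concretely, let $k := T_p(m)$ and let $N$ be the number of $y \in \F_q$ for which $\chi(y)=1$ and $\chi(\Phi_{2j}(y))=1$ for all $j \in \{1,\dots,m-1\}$ with $p \nmid j$; there are exactly $k$ such conditions, since the index set has size $|I(m)| = T_p(m)$. Writing each condition through the indicator $\tfrac12(1+\chi(\cdot))$ and expanding the product, one obtains $2^k$ character sums indexed by the subsets of the conditions. The empty subset contributes the main term $q/2^{k}$. For a nonempty subset the argument of $\chi$ is a product of distinct factors among $x$ and the $\Phi_{2j}$; by \cref{lem: Phi}(2)--(3) these are squarefree and pairwise coprime (and $x$ is coprime to every $\Phi_{2j}$ since $\Phi_{2j}(0)\neq 0$ in $\F_q$), so the product is squarefree and in particular not a perfect square, whence \cref{Weil} applies. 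Summing the Weil estimates over all nonempty subsets, and bounding the boundary contribution from the at most $1+S_p(m)$ elements at which some factor vanishes, I would obtain
\[
N \ge \frac{q}{2^{T_p(m)}} - \frac{1+S_p(m)}{2}\,\sqrt q - \big(1+S_p(m)\big),
\]
using that $\deg\!\big(x\prod \Phi_{2j}\big) = 1+S_p(m)$.

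It remains to arrange that such a $y$ has order at least $m$. The number of $y \in \F_q^*$ of order less than $m$ is at most $\sum_{d<m}\varphi(d) = O(m^2) = O\big((\log q)^2\big)$, which is negligible next to $\sqrt q$; subtracting this from $N$, a valid $y$ exists as soon as $q/2^{T_p(m)}$ exceeds $\tfrac12 S_p(m)\sqrt q$ with a little room to spare. Here I would invoke \cref{Estimation-S_p(t)} to replace $S_p(m)$ by the cleaner $\tfrac{p}{p-1}T_p^2(m)$, so that the sufficient condition becomes an inequality of the shape
\[
T_p(m) - 1 + \log_2\!\Big(\tfrac{p}{p-1}\,T_p^2(m)\Big) \le \tfrac12\log_2 q .
\]
Using $T_p(m) = m - \lfloor (m-1)/p\rfloor$, so that $m \approx \tfrac{p}{p-1}T_p(m)$ and $T_p(m) \approx \tfrac12\log_2 q$ forces $\log_2 T_p(m) \approx \log_2\log_2 q$, I would then solve for the largest admissible $m$ and check that it is at least $\lfloor Q\rfloor - 1$ (decreasing $m$ by one to secure the parity required by \cref{complete-condition}, exactly as in the Case~I corollary). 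Then \cref{complete-condition} yields $M(q) \ge m+1 \ge \lfloor Q\rfloor$.

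The character-sum bookkeeping is routine and is guaranteed to work by \cref{lem: Phi}; the genuine obstacle is the final optimization. Converting the bound on $T_p(m)$ into a bound on $m$ through $T_p(m)=m-\lfloor(m-1)/p\rfloor$, and tracking the $\log\log q$ term together with the lower-order constants carefully enough to land on the exact constant in $Q$, is precisely the point at which Cohen's argument in \cite{88Cohen} must be reworked rather than cited, since the relevant conditions here are the values $\Phi_{2j}(y)$ rather than the factors occurring in the $q \equiv 1 \pmod 4$ case.
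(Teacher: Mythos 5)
Your proposal is correct and follows essentially the same route as the paper: reduce to \cref{complete-condition}, count admissible $y$ by expanding quadratic-character indicators over the $T_p(m)$ conditions, apply \cref{Weil} to the squarefree, pairwise-coprime products guaranteed by \cref{lem: Phi}, control the degree sum via \cref{Estimation-S_p(t)}, and optimize $m$ through $T_p(m)=m-\lfloor(m-1)/p\rfloor$ with the same parity adjustment at the end. The only difference is cosmetic bookkeeping in the error terms (your $\tfrac{1+S_p(m)}{2}\sqrt q$ versus the paper's $2^{-T}(2^{T-1}S_p(m)-(2^T-1))\sqrt q$), and the final calibration you defer to ``reworking Cohen'' is carried out in the paper exactly along the lines you sketch.
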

\begin{proof}
Without loss of generality, we assume $Q>2$. We separate the integral part and the non-integral of $Q$ as follows
\[Q=pu+v+\epsilon,\]
where $u$ and $v$ are non-negative integers with $0\leq v \leq p-1$ and $0\leq \epsilon <1$. Let $m=\floor{Q}=pu+v$ be the integral part of $Q$. 
Then we have
\[
T_{p}(m)= m - \left \lfloor \frac{m-1}{p} \right \rfloor \le (\log 2)^{-1}\left(\frac{1}{2}\log q -2 \log \log q \right)+2\]
by the same idea from \cite[Theorem 1]{88Cohen}.
From now on, we denote $T_{p}(m)$ by $T$.
Together with \cref{Estimation-S_p(t)}, we have
\begin{equation}\label{eq3}
T \leq (\log 2)^{-1}\left(\frac{1}{2}\log q -2 \log\log q\right)+2 \quad \text{and} \quad S_{p}(m)<\frac{p}{p-1}T^{2}, \quad (m\geq 2).   
\end{equation}

Let $\chi$ denote the quadratic character in $\F_q$.
Thus, we formulate the number of $y$, denoted by $N(m)$, which satisfies the conditions of \cref{complete-condition} as follows: 
\begin{equation}
\label{Nt}
N(m):=2^{-T} \sum_{\substack{y\in\F_q \\ \ord y \geq m }} \prod_{i \in I(m) }(1+\chi (\Phi_{2i}(y))),
\end{equation}
\textcolor{black}{where we denote $\Phi_{0}(y)=y$. The introduction of this notation allows us to replace the condition that $y$ is a square with the condition that $\Phi_{0}(y)$ is a square and write the above equation in a more compact form.} 
From \cref{Nt}, we deduce that
\begin{equation}\label{Nm}
N(m)  =2^{-T} \sum_{\substack{y\in\F_q \\ \ord y \geq m }} \left(1+\sum_{f}\chi (f(y))\right), 
\end{equation}
where the sum is over all $2^{T}-1$ non-constant polynomials $f$ of the form
\begin{equation}
    \label{product-of-cyclotomic}
    f(x)= \prod_{i \in I(m) } \Phi_{2i}^{k_{i}}(x), \quad  k_{i}=0 \text{~or~} 1.
\end{equation}
Now we rewrite equation \cref{Nm} into complete character sums:
\begin{align}
N(m) & =2^{-T} \sum_{\substack{y\in\F_q \\ \ord y \geq m }} \left(1+\sum_{f}\chi (f(y))\right) \notag\\
&=2^{-T} \sum_{\substack{y\in\F_q}} \left(1+\sum_{f}\chi (f(y))\right) - 2^{-T} \sum_{\substack{y\in\F_q \\ \ord y < m }} \left(1+\sum_{f}\chi (f(y))\right)\notag \\
& \geq 2^{-T} \left( q - \sum_{f}\left| \sum_{y\in \F_q}\chi(f(y))\right| \right)-S_p(m),\label{eq1}
\end{align}
where the last inequality can be deduced from
\[2^{-T} \sum_{\substack{y\in\F_q \\ \ord y < m }} \left(1+\sum_{f}\chi (f(y))\right) \leq s_{p}(m)+\frac{2^{T} - 1}{2^{T}} \leq S_p(m). \]

 Note that a polynomial $f$ of the form \cref{product-of-cyclotomic} is monic and is not a square of a polynomial by \cref{lem: Phi}. Thus, Weil's bound (\cref{Weil}) implies that
\[ \left| \sum_{y\in \F_q}\chi(f(y))\right| \leq (\deg f -1) \sqrt{q} \le \left(  \sum_{i \in I(m)} k_{i}\varphi(2i) -1  \right) \sqrt{q}. \]
Here, the second inequality is from the expression \cref{product-of-cyclotomic} of $f$.
Hence, since $\Phi_{2i}$ appears as a factor in $2^{T-1}$ of the polynomials $f$, we can write
\[
    \sum_{f}\left| \sum_{y\in \F_q}\chi(f(y))\right| \leq \left(2^{T-1} S_{p}(m)-(2^{T}-1)\right) \sqrt{q}
\]
Therefore, estimation \cref{eq1} and \cref{Estimation-S_p(t)} imply that
\begin{align}
    N(m)& \geq 2^{-T} \left( q- (2^{T-1} S_{p}(m)-(2^{T}-1))\sqrt{q}  \right) -S_p(m), \notag \\
        & \geq 2^{-T} \left( q- (2^{T-1} \frac{p}{p-1} T^2-(2^{T}-1))\sqrt{q}  \right) -\frac{p}{p-1} T^2, \notag \\
    & \geq 2^{-T} \sqrt{q}\left( \sqrt{q}- 2^{T-1} \frac{p}{p-1} T^2 \right)+(1-2^{-T})\sqrt{q}  -\frac{p}{p-1} T^2, \notag 
\end{align}
as $S_p(m)<\frac{p}{p-1} T^2$. If $T=1$ and $q > 4$, we get $N(m)>0$. Otherwise, if $\sqrt{q}>2^{T-1}\frac{p}{p-1} T^2$, we have $N(m)>0$ since
\[
(1-2^{-T})\sqrt{q}  -\frac{p}{p-1} T^2  >  (1-2^{-T})2^{T-1}\frac{p}{p-1} T^2  -\frac{p}{p-1} T^2>0\\
\]
where the last inequality follows since $T\geq 2$. 
Hence, in order to prove $N(m)>0$, it is sufficient to prove 
$\sqrt{q}>2^{T-1}\frac{p}{p-1} T^2$.
Hence, it suffices to show that
\[(T-1)\log 2 + 2\log T+\log\left( \frac{p}{p-1}\right) <\frac{1}{2}\log q.\]
Note that by inequality \cref{eq3},
\[\log T < \log \left( \frac{\frac{1}{2}\log q -2 \log \log q + 2 \log 2 }{\log 2}  \right) < \log \left( \frac{\log q }{2\log 2} \right)\]
for $q>e^2$.
Thus, we have $\log T < \log\log q  -\log (2\log 2)$, and by inequality \cref{eq3}, 
\begin{align*}
(T-1)\log 2 + 2\log T + \log\left( \frac{p}{p-1}\right) 
& \le \frac{1}{2}\log q -2 \log \log q + \log 2 +2 \log T +\log\left(\frac{p}{p-1}\right) \\
&<\frac{1}{2}\log q + \log 2 -2 \log (2\log 2)+\log\left(\frac{p}{p-1}\right)\\
&< \frac{1}{2} \log q,
\end{align*}
where the last inequality follows since $\log \left(\frac{p}{p-1}\right)<\log 2$ for any prime $p>2$. The above argument shows that there is $y \in \F_q^*$ satisfying the assumption of \cref{complete-condition} with order at least $m$, except that $m$ is possibly odd, in which case we may replace $m$ by $m-1$. \cref{complete-condition} then implies that 
\[M(q)\geq (m-1)+1=m=\left \lfloor Q\right \rfloor.\]
\end{proof}

\begin{remark}\label{3mod8}
In the case of $q \equiv 3 \pmod{8}$, the construction in \cref{complete-condition} almost works, except that $2$ is not a square. Nevertheless, by dropping those elements with negative exponents, we can still explicitly construct a Diophantine tuple in $\F_q$ with size at least $(1-o(1))\frac{p}{2(p-1)} \log_4 q$. Such a Diophantine tuple might not be maximal in view of \cref{prop:maximal}. 

We also attempted to use other sorts of constructions, but we did not make much progress. It seems our proof relies on the fact that a polynomial of the form $x^i\pm 1$ has a nice factorization into cyclotomic polynomials.
\end{remark}

\section*{Acknowledgements}
The second author thanks Greg Martin for helpful discussions. The third author was supported by the KIAS Individual Grant (CG082701) at the Korea Institute for Advanced Study and supported by the Institute for Basic Science (IBS-R029-C1).

\section*{Statements and Declarations}

The authors declare that they have no known competing financial interests or personal relationships that could have appeared to influence the work reported in this paper.

\bibliographystyle{abbrv}
\bibliography{references}

\end{document}